\newtheorem{theorem}{Theorem}[section]
\newtheorem{claim}{Claim}[theorem]
\newtheorem{lemma}[theorem]{Lemma}
\theoremstyle{definition}
\newcommand{\bR}{\mathbb R}
\newcommand{\bZ}{\mathbb Z}
\newcommand{\cC}{\mathcal{C}}
\newcommand{\cZ}{\mathcal{Z}}
\newcommand{\cW}{\mathcal{W}}
\newcommand{\eps}{\varepsilon}
\newcommand{\prob}{\mathbf{P}}
\newcommand{\ev}{\mathbf{E}}
\newcommand{\norm}[1]{\left\lVert #1 \right\rVert}
\begin{document}
\sloppy
\allowdisplaybreaks
\title[Cyclicity of a random subgraph]
{On the probability that a random subgraph contains a circuit}\begin{abstract}
	Let $\mu > 2$ and $\eps > 0$. We show that, if $G$ is a sufficiently large simple graph of average degree at least $\mu$, and $H$ is a random spanning subgraph of $G$ formed by including each edge independently with probability $p \ge \tfrac{1}{\mu-1} + \eps$, then $H$ contains a cycle with probability at least $1 - \eps$. 
	\end{abstract}
\author[Nelson]{Peter Nelson}
%\subjclass{05B35}
%\keywords{matroids, growth rates}
\date{\today}

\maketitle

\section{Introduction}

We prove the following theorem:

\begin{theorem}\label{main}
	For all $\eps > 0$ and $\mu > 2$, there exists $N \in \bZ$ so that, if $G$ is a graph on at least $N$ vertices with average degree at least $\mu$, and $H$ is a random spanning subgraph of $G$ formed by including each edge of $G$ independently with probability $p \ge \tfrac{1}{\mu-1} + \eps$, then $H$ contains a circuit with probability at least $1-\eps$. 
\end{theorem}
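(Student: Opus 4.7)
The plan is to exploit the branching-process intuition behind the threshold $p_{c}=\tfrac{1}{\mu-1}$: a breadth-first exploration of $H$ from a vertex $v\in V(G)$ grows, after the first step, with offspring mean $p(\mu-1)\ge 1+\eps(\mu-1)>1$, so the exploration is supercritical, and in a finite graph should eventually close a cycle.

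My first step would be to pass to a subgraph $G^{\ast}$ of $G$ where the branching analysis is valid. The classical reduction of iteratively deleting vertices of degree $\le\mu/2$ is \emph{too lossy}---even a constant-factor loss in $\mu$ destroys the exact threshold $\tfrac{1}{\mu-1}$. Instead I would prove a dichotomy controlled by a length $L=L(\mu,\eps)$: either (a) $G$ contains $\Omega(|V(G)|)$ cycles of length at most $L$, or (b) a careful trimming---deleting only vertices that both have degree below $\mu-\eta$ and lie outside every short cycle---yields a $G^{\ast}$ of girth $>L$, minimum degree close to $\mu$, containing almost all of the vertices and edges of $G$.

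In case (a), I would greedily pick a collection of $\Omega(|V(G)|)$ nearly edge-disjoint cycles of length $\le L$ (each short cycle conflicts with at most $O(L^{2})$ others, so linear greedy extraction succeeds); each such cycle lies in $H$ with probability at least $p^{L}$, and a Chernoff bound (or Lov\'asz Local Lemma for the mild overlap) then gives at least one in $H$ with probability $1-\eps$ once $n$ is large enough. In case (b), I would explore $H$ by BFS from a uniformly random vertex of $G^{\ast}$: since $G^{\ast}$ has girth $>L$, the first $\Theta(L)$ generations stochastically dominate a Galton--Watson branching process with $\mathrm{Bin}(\mu-1,p)$ offspring, which reaches $n^{\Omega(1)}$ vertices with probability $\beta(\mu,\eps)>0$. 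Conditional on survival, the finiteness of $G^{\ast}$ forces two branches of the BFS to collide in a non-tree edge, producing a cycle. To upgrade this constant success probability to $1-\eps$, I would either start independent BFS from $O(\log(1/\eps))$ random roots, or split $p$ as $q+r$ with $q>p_{c}$ and use the $r$-exposure as a sprinkling round that closes a cycle in the giant region produced by the $q$-exposure.

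The main obstacle is the reduction step: preserving the threshold $\tfrac{1}{\mu-1}$ \emph{exactly} forbids any constant-factor loss in $\mu$, so the two halves of the dichotomy must be balanced very carefully---likely by coupling the trimming criterion to the short-cycle count, so that case (a) activates the instant case (b)'s girth requirement fails. Balancing $L$, $\eta$, and the threshold $N$ against $\eps$ to close this coupling cleanly is likely the subtlest technical point.
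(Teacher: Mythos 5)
The dichotomy you propose — many short cycles versus a high-girth, well-connected subgraph on most of the vertices — is the same coarse decomposition the paper uses, and the ``sprinkling'' / multi-start boosting you sketch plays the same role as the paper's bootstrapping step. So the outline is recognisable. But there is a real gap, and it is exactly at the point you yourself flag as the subtlest.

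Your case (b) asserts that a ``careful trimming'' yields a subgraph $G^{\ast}$ of girth $>L$ with \emph{minimum} degree close to $\mu$. This is false in general, and no trimming can achieve it. Consider a bipartite graph with $k$ vertices of degree $d$ on one side and $kd/2$ vertices of degree $2$ on the other: its average degree tends to $4$ as $d\to\infty$, its girth can be made arbitrarily large, and yet \emph{every} subgraph has minimum degree at most $2$ (the high-degree vertices are only adjacent to degree-$2$ vertices, so any non-edgeless subgraph contains a vertex of degree $\le 2$). Such a graph sits squarely in your case (b) and has no subgraph on which a $\mathrm{Bin}(\mu-1,p)$ Galton--Watson domination is available. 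Your BFS step therefore cannot be carried out as stated, and because the theorem must hold at the \emph{exact} threshold $\tfrac{1}{\mu-1}$, you cannot afford the factor-of-two loss of the classical ``iteratively delete low-degree vertices'' lemma either.

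The missing idea is to replace ``minimum degree'' and Galton--Watson domination by the Perron eigenvalue $\lambda_{*}$ of the non-backtracking walk matrix $B(G)$. A result of Alon, Hoory and Linial (Lemma~\ref{boundLambda} in the paper) gives $\lambda_{*}(B(H))\ge\mu-1$ for any connected $H$ of average degree $\ge\mu$ and minimum degree $\ge 2$ — an \emph{average}-degree hypothesis, which the trimming can actually deliver. One then runs a second-moment argument on the universal covering tree $\Gamma_{e_{0}}(G)$, weighting level-$n$ vertices by the Perron eigenvector to define a unit flow $\phi$ (Lemma~\ref{treesubset}); this shows a $p$-random edge subset contains a long path through the root with probability $\ge p-\tfrac{1}{\lambda_{*}}$, which transfers to a circuit in $G$ with probability $\Omega(\eps^{2})$ (Theorem~\ref{mainalg}). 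Finally, instead of multi-start boosting, the paper invokes Margulis' sharp-threshold theorem (Theorem~\ref{margulis}): the relevant family of edge sets is monotone with boundary parameter equal to the girth, which is $\ge s$ by the trimming, so a constant success probability at $p_{1}$ jumps to $1-\eps$ at $p_{1}+\tfrac{\eps^{2}}{16}$. Your sprinkling idea is a plausible alternative for this last step, but without the eigenvalue/covering-tree machinery the supercritical exploration in case (b) does not go through.
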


The same result was shown by Alon and Bachmat [\ref{ab06}] in the special case where $G$ is regular. They also show that, for any $\eps > 0$ and integer $d \ge 3$, there is a $d$-regular graph $G$ such that a random spanning subgraph of $G$ in which edges are chosen with probability $\tfrac{1}{d-1} - \eps$ contains a circuit with probability at most $\eps$; in other words, $\tfrac{1}{\mu-1}$ cannot be replaced by any smaller value in the above theorem if $\mu \in \bZ$. On the other hand, it can be shown that if $\mu \notin \bZ$, then the value $\tfrac{1}{\mu-1}$ \emph{can} be lowered; we discuss this in the next section. 

Our proof is based on that of the main theorem of [\ref{nvz15}], which concerns the related question of the probability that $H$ contains at least half of the edges of some circuit of $G$. The aim of this paper is to present the relatively simple proof of Theorem~\ref{main} without the inherent and numerous technicalities of [\ref{nvz15}]. 

Similar questions concerning cycles in random subgraphs of graphs with a given \emph{minimum} degree were considered in [\ref{fk13}] and [\ref{ks14}]. 

\section{Preliminaries}
	All graphs are simple and finite unless otherwise stated. We use some standard graph theory terminology such as \emph{path}, \emph{walk}, \emph{girth} and \emph{adjacency matrix} (for a directed graph); see [\ref{diestel}] for a reference. For $p \in [0,1]$, a \emph{$p$-random} subset of a set $E$ refers to a set $X \subseteq E$ obtained by including each element of $E$ independently at random with probability $p$. We also use some basic probability theory; for a reference, see [\ref{durrett}].  
	
	\subsection*{Zero-one Laws}
Let $E$ be a finite set. For each $\cW \subseteq 2^E$, let $f_{\cW}\colon [0,1] \to [0,1]$ be defined by $f_{\cW}(p) = \sum_{X \in \cW}p^{|X|}(1-p)^{|E|-|X|}$; i.e. $f_{\cW}(p)$ is the probability that a $p$-random subset of $E$ is in $\cW$. An important tool in our proof is a  theorem of Margulis that gives a very general sufficient condition for $f_{\cW}(p)$ to display a zero-one-law type behaviour. In [\ref{margulis74}], Margulis states his theorem in very high generality (and in Russian). Our statement follows a convenient `discrete' formulation found in ([\ref{zemor}], Section 2). 

	We say that $\cW \subseteq 2^{E}$ is \emph{increasing} if, whenever $X \in \cW$ and $X \subseteq X'$, we have $X' \in \cW$. Given $X \in 2^{E}$, let $s(X)$ denote the collection of subsets of $E$ that differ from $X$ by a single addition or removal (i.e. the Hamming sphere of radius $1$ around $X$ in $2^{E}$). Let $\Delta (\cW)$ denote the minimum nonzero value of $|s(X) \setminus \cW|$ over all $W \in \cW$. 
	
	Note that, if $\varnothing \ne \cW \ne 2^E$ and $\cW$ is increasing, the function $f_{\cW}(p)$ is monotonely increasing with $f_{\cW}(0) = 0$ and $f_{\cW}(1) = 1$. Margulis' theorem states that if $\Delta(\cW)$ is sufficiently large, then the value of $f_{\cW}(p)$ is nearly always close to zero or one.
	
	\begin{theorem}\label{margulis}
		For all $\eps > 0$ there exists $s \in \bZ$ so that, if $E$ is a finite set, and $\cW \subseteq 2^E$ is increasing and satisfies $\Delta(W) \ge s$, then the interval $\{p \in [0,1]\colon \eps \le f_{\cW}(p) \le 1-\eps\}$ has length less than $\eps$. 
	\end{theorem}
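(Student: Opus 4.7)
The plan is to use the Margulis--Russo identity to write $f_\cW'(p)$ as a sum of pivotal probabilities, apply $\Delta(\cW) \ge s$ together with an isoperimetric estimate for the product measure on $2^E$ to lower-bound $f_\cW'(p)$ on the interval where $f_\cW(p) \in [\eps, 1-\eps]$, and then integrate.

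First I would establish, for increasing $\cW$,
\[
p\, f_\cW'(p) \;=\; \sum_{W \in \cW,\; \delta(W) > 0} \delta(W)\, \mu_p(W),
\]
where $\mu_p(W) = p^{|W|}(1-p)^{|E|-|W|}$ and $\delta(W) := |s(W) \setminus \cW|$. This is a direct rearrangement of the Margulis--Russo formula $f_\cW'(p) = \sum_e \mu_p(e\ \text{pivotal})$, using the increasing property to identify $\delta(W)$ with the number of $e \in W$ such that $W \setminus \{e\} \notin \cW$. Dually, $(1-p) f_\cW'(p) = \sum_{X \notin \cW,\; \delta'(X) > 0} \delta'(X)\, \mu_p(X)$ with $\delta'(X) := |\{e \notin X : X \cup \{e\} \in \cW\}|$.

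Second, since $\delta(W) > 0$ forces $\delta(W) \ge s$ under $\Delta(\cW) \ge s$, the first identity yields
\[
f_\cW'(p) \;\ge\; \tfrac{s}{p}\,\mu_p(\cW^*), \qquad \cW^* := \{W \in \cW : \delta(W) \ge 1\},
\]
while the dual identity gives $f_\cW'(p) \ge \mu_p(\partial^+ \cW)/(1-p)$, where $\partial^+ \cW := \{X \notin \cW : \delta'(X) \ge 1\}$. The crux is a Margulis-type vertex-isoperimetric inequality: for any increasing $\cW$ and any $p \in (0,1)$,
\[
\mu_p(\cW^*) + \mu_p(\partial^+ \cW) \;\ge\; h(p)\,f_\cW(p)\bigl(1 - f_\cW(p)\bigr),
\]
with $h(p) > 0$ bounded below on compact subintervals of $(0,1)$. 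This is the substantive content of Margulis' theorem, proved in the original via an ergodic-theoretic averaging argument and in Z\'emor's discrete formulation via a combinatorial coupling. Combining this estimate with $f_\cW(p)(1-f_\cW(p)) \ge \eps(1-\eps)$ on $I_\eps := \{p : \eps \le f_\cW(p) \le 1-\eps\}$, and splitting according to whether the $\mu_p(\cW^*)$ or $\mu_p(\partial^+ \cW)$ term dominates, one deduces $f_\cW'(p) \ge c(\eps)\, s$ on $I_\eps$, for some $c(\eps) > 0$ independent of $s$ and $E$.

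Finally, integrating gives $1 - 2\eps \ge \int_{I_\eps} f_\cW'(p)\,dp \ge c(\eps)\, s\, |I_\eps|$, so $|I_\eps| \le (1-2\eps)/(c(\eps)\,s)$; this is less than $\eps$ once $s > (1-2\eps)/(c(\eps)\eps)$, which gives the required value of $s$. The main obstacle is the isoperimetric estimate in the second paragraph: the Margulis--Russo computation and the final integration are routine, but lower-bounding $\mu_p(\cW^*) + \mu_p(\partial^+ \cW)$ by $h(p)\,f(1-f)$ in the required generality is delicate and is where the depth of the theorem lies.
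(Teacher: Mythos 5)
Your proposal cannot be measured against an internal argument, because the paper contains none: Theorem~\ref{margulis} is imported wholesale from Margulis [\ref{margulis74}], in the discrete formulation of Z\'emor [\ref{zemor}], and is never proved there. Judged on its own terms, your attempt has a genuine gap at exactly the point you flag as the crux: the inequality $\mu_p(\cW^*)+\mu_p(\partial^+\cW)\ge h(p)\,f_\cW(p)(1-f_\cW(p))$ is not proved but referred back to Margulis and Z\'emor. Since that estimate is, by your own description, where the depth of the theorem lies, the proposal is a reduction of the theorem to the literature rather than a proof of it. The routine parts are fine: the rearranged Russo--Margulis identity $p f_\cW'(p)=\sum_{W\in\cW}\delta(W)\mu_p(W)$ is correct for increasing $\cW$, and the final integration over the monotone $f_\cW$ is sound.

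Moreover, even granting your isoperimetric inequality exactly as stated, the deduction $f_\cW'(p)\ge c(\eps)\,s$ on $I_\eps$ does not follow. The hypothesis $\Delta(\cW)\ge s$ constrains only $\delta(W)$ for $W\in\cW$; it says nothing about $\delta'(X)$ for $X\notin\cW$. So only the $\mu_p(\cW^*)$ branch of your case split carries the factor $s$; in the branch where $\mu_p(\partial^+\cW)$ dominates you obtain only $f_\cW'(p)\ge \tfrac{h(p)}{2(1-p)}f_\cW(p)(1-f_\cW(p))$, with no dependence on $s$, and then the window length does not shrink as $s\to\infty$. Relatedly, ``$h(p)$ bounded below on compact subintervals of $(0,1)$'' is not enough: under $\Delta(\cW)\ge s$ the window can sit arbitrarily close to an endpoint (for $\cW=\{X\colon X\supseteq W_0\}$ with $|W_0|=s$ it lies near $p=1$; for $\cW$ generated by many disjoint $s$-element blocks it lies near $p=0$), so no constant uniform in $E$ and $\cW$ comes out of your bookkeeping without tracking the explicit $p$-dependence. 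The inequality one actually needs is of the Margulis--Talagrand form $\ev\bigl[\sqrt{h_\cW}\,\bigr]\ge c(p)\,f_\cW(p)(1-f_\cW(p))$, where $h_\cW(X)$ is the number of pivotal coordinates at $X\in\cW$ (so $h_\cW=\delta$ on $\cW$ and $0$ off it); since $h_\cW\ge s$ on its support, $p f_\cW'(p)=\ev[h_\cW]\ge\sqrt{s}\,\ev\bigl[\sqrt{h_\cW}\,\bigr]$, and the factor $\sqrt{s}$ enters uniformly on $I_\eps$ with no dual branch to spoil it; one must still control the behaviour of $c(p)$ near $p=0,1$ to finish. As written, your outline neither proves the key estimate nor, in the form you guessed for it, suffices to close the argument.
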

	
	We will apply this result in the very special case where $E$ is the edge set of a graph $G$, and $\cW$ is the collection of edge-sets of subgraphs of $G$ that contain a circuit. In this setting, it is easy to see that the parameter $\Delta(\cW)$ is exactly the girth of $G$.

\subsection*{Non-backtracking walks}\label{walksection}

A \emph{non-backtracking walk} of length $\ell$ in a graph $G$ is a walk $(v_0,v_1,\dotsc,v_\ell)$ of $G$ so that $v_{i+1} \ne v_{i-1}$ for all $i \in \{1,\dotsc, \ell-1\}$. In all nontrivial cases, the number of such walks grows roughly exponentially in $\ell$; in this section we state a result of Alon et al. that estimates the base of this exponent. 
Let $G = (V,E)$ be a connected graph of minimum degree at least $2$. 
Let $\overline{E} = \{(u,v) \in V^2: u \sim_G v\}$ be the $2|E|$-element set of arcs of $G$. Let $B = B(G) \in \{0,1\}^{\overline{E} \times \overline{E}}$ be the matrix so that $B_{(u,v),(u',v')} = 1$ if and only if $u' = v$ and $u \ne v'$. It is easy to see that 
\begin{enumerate}
	\item\label{scd} $B$ is the adjacency matrix of a strongly connected digraph (essentially the `line digraph' of $G$), and 
	\item\label{nbw} For each integer $\ell \ge 2$, the entry $(B^{\ell-1})_{e,f}$ is  the number of non-backtracking walks of length $\ell$ in $G$ with first arc $(v_0,v_1) = e$ and last arc $(v_\ell,v_{\ell+1})= f$.
\end{enumerate}

 By (\ref{scd}) and the Perron-Fr\"obenius theorem (see [\ref{gr}], section 8.8), there is a positive real eigenvalue $\lambda_*$ of $B$ and an associated positive real eigenvector $w_*$, so that $|\lambda_*| \ge |\lambda|$ for every eigenvalue $\lambda$ of $B$. Furthermore, by Gelfand's formula [\ref{gelfand}] we have $\lambda_* = \lim_{n \to \infty} \norm{B^n}^{1/n}$, where $\norm{B^n}$ denotes the sum of the absolute values of the entries of $B^n$. By (\ref{nbw}), the parameter $\lambda_* = \lambda_*(B(G))$ thus governs the growth of non-backtracking walks in $G$. It is clear that if $G$ is $d$-regular we have $\lambda_*(B(G)) = d-1$; the following result of Alon et al. [\ref{alon}] shows that the average degree gives a similar lower bound for general graphs:
 \begin{lemma}\label{boundLambda}
 	Let $\mu \ge 2$. If $G$ is a connected graph of average degree at least $\mu$ and minimum degree at least $2$, then $\lambda_*(B(G)) \ge \mu - 1$. 
 \end{lemma}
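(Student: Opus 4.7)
The plan is to produce a real eigenvalue of $B$ of magnitude at least $\mu-1$ by analysing the symmetric quadratic pencil
\[
M(u) \;:=\; I - uA + u^2(D - I) \;\in\; \bR^{V \times V},
\]
where $A$ and $D$ are the adjacency and diagonal degree matrices of $G$. The key input is a one-line degree count: using $\mathbf{1}^T A \mathbf{1} = \mathbf{1}^T D \mathbf{1} = 2|E| = \mu|V|$, one computes
\[
\mathbf{1}^T M(u) \mathbf{1} \;=\; |V|\bigl(1 - \mu u + (\mu-1)u^2\bigr) \;=\; |V|(1-u)\bigl(1-(\mu-1)u\bigr),
\]
which vanishes at $u^{*} := \tfrac{1}{\mu-1}$.

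Since $M(u)$ is symmetric and $\mathbf{1}^T M(u^*)\mathbf{1} = 0$, its smallest eigenvalue satisfies $\lambda_{\min}(M(u^*)) \le 0$; as $\lambda_{\min}(M(0)) = 1$ and $\lambda_{\min}(M(u))$ is continuous in $u$, there exists $\tilde u \in (0, u^*]$ for which $M(\tilde u)$ is singular, so $M(\tilde u)y = 0$ for some nonzero $y \in \bR^{V}$. The next step is to lift $y$ to an arc vector $v \in \bR^{\overline{E}}$ via the standard non-backtracking formula $v_{(a,b)} := y_b - \tilde u\,y_a$. Reading $M(\tilde u)y = 0$ as $Ay = \tfrac{1}{\tilde u}y + \tilde u(D-I)y$ and directly computing
\[
(Bv)_{(u,w)} \;=\; \sum_{x \sim w,\; x \neq u}\bigl(y_x - \tilde u\,y_w\bigr),
\]
the $\deg(w)$-dependent terms cancel and one is left with $\tfrac{1}{\tilde u}(y_w - \tilde u\,y_u) = \tfrac{1}{\tilde u}\,v_{(u,w)}$. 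Hence $Bv = \tfrac{1}{\tilde u}v$, and so long as $v \not\equiv 0$ this yields a real eigenvalue $\tfrac{1}{\tilde u} \ge \mu-1$ of $B$, giving $\lambda_* \ge \mu-1$ as required.

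The main subtlety, and the step I would be most careful with, is nondegeneracy of $v$. If $v \equiv 0$, then $y_b = \tilde u\,y_a$ along every arc $(a,b)$; applying this to both orientations of an edge forces $y_a = \tilde u^2\,y_a$, and since $\mu > 2$ gives $\tilde u \le \tfrac{1}{\mu-1} < 1$, connectivity of $G$ then propagates $y \equiv 0$, contradicting the choice of $y$. The boundary case $\mu = 2$ warrants a brief separate note: a connected graph of minimum degree at least $2$ and average degree exactly $2$ is a single cycle, for which $\lambda_*(B) = 1 = \mu-1$ by direct computation.
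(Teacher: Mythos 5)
Your proof is correct, but it takes a genuinely different route from the one the paper relies on. The paper does not prove Lemma~\ref{boundLambda} itself; it cites Alon, Hoory, and Linial, whose argument is combinatorial: one counts non-backtracking walks weighted by degrees and shows $\lambda_*(B(G)) \ge \Lambda(G)$, where $\Lambda(G)$ is a symmetric function of the degree sequence (essentially a degree-weighted geometric mean of the $d_i - 1$), and then applies a Jensen-type inequality to deduce $\Lambda(G) \ge \mu - 1$. Your approach instead works entirely through the Ihara--Bass quadratic pencil $M(u) = I - uA + u^2(D-I)$: locating a singular value $\tilde u \le \tfrac{1}{\mu-1}$ via the Rayleigh quotient on $\mathbf{1}$ and the intermediate value theorem, and then lifting a null vector $y$ of $M(\tilde u)$ to an arc eigenvector $v_{(a,b)} = y_b - \tilde u\,y_a$ of $B$ with eigenvalue $\tfrac{1}{\tilde u}$. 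I checked the lifting identity $Bv = \tfrac{1}{\tilde u}v$ directly (it uses $(Ay)_w = \tfrac{1}{\tilde u}y_w + \tilde u(\deg(w)-1)y_w$, and the $\deg(w)$ terms cancel as you claim), the nondegeneracy argument via $y_a = \tilde u^2 y_a$, and the boundary case $\mu_0 = 2$ (cycle); all are sound. Two tiny remarks: your opening identity $\mathbf{1}^T A\mathbf{1} = \mu|V|$ tacitly takes $\mu$ to be the \emph{exact} average degree, which is the right move but worth flagging since the hypothesis only gives a lower bound; and once $y_a = 0$ for every vertex on an edge, the conclusion $y \equiv 0$ follows from minimum degree $\ge 2$ alone, so invoking connectivity is unnecessary. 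Compared to Alon--Hoory--Linial, your pencil argument is shorter and purely linear-algebraic, but it produces only the bound $\mu - 1$ and does not naturally recover the refined quantity $\Lambda(G)$, which the paper later uses (via [\ref{nvz15}]) to argue that the threshold $\tfrac{1}{\mu-1}$ can be lowered when $\mu\notin\bZ$. So your proof fully establishes the lemma as stated, at the cost of not yielding the stronger degree-sequence-dependent estimate for free.
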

 In fact, the argument in [\ref{alon}] shows that $\lambda_*(B(G)) \ge \Lambda(G)$, where $\Lambda(G)$ is a certain symmetric function in the degree sequence of $G$ that is bounded below by $\mu-1$. When $\mu \notin \bZ$, the inequality $\Lambda(G) \ge \mu - 1$ cannot hold with equality; in fact one can show (see [\ref{nvz15}], Lemma 3.2) that $\lambda_*(B(G)) \ge \mu - 1 + \tfrac{\eta(\mu)^3}{8\mu^3}$, where $\eta(\mu)$ denotes the distance from $\mu$ to the nearest integer. This can easily be shown to lead to an improved version of Theorem~\ref{main} where $\frac{1}{\mu-1}$ is replaced by a strictly smaller value for nonintegral $\mu$. 
 
\section{Covering Trees}

Given a connected graph $G = (V,E)$ of minimum degree at least $2$, we denote the set of arcs of $G$, as before, by $\bar{E}$. Given an arc $e_0  = (u_0,u_1) \in \bar{E}$, the \emph{covering tree of $G$ at $e_0$}, for which we write $\Gamma_{e_0}(G)$, is the infinite rooted tree $\Gamma$ whose root is the length-zero walk $(u_0)$, the other vertices are the non-backtracking walks of $G$ whose first arc is $e_0$, and the children of each walk $(u_0,u_1, \dotsc,u_k) \in V(\Gamma)$ are exactly its extensions by a single arc: that is, the nonbacktracking walks of the form $(u_0,u_1, \dotsc, u_k,u_{k+1})$. Note that the unique child of the root is the walk $(e_0) = (u_0,u_1)$.

Given such a $\Gamma$ with root $r$, we borrow some terminology from [\ref{lyons}]. For $x \in V(\Gamma)$, we write $|x|$ for the distance from $x$ to $r$ in $\Gamma$. For $x,y \in V(\Gamma)$, we write $x \preceq y$ if $x$ is on the path from $r$ to $y$, and $x \wedge y$ for the \emph{join} of $x$ and $y$ in $\Gamma$: that is, the unique vertex $z$ on both the path from $r$ to $x$ and the path from $r$ to $y$ for which $|z|$ is maximized. 

The map $\theta\colon V(\Gamma) \to V(G)$ that assigns each walk to its final vertex is a graph homomorphism that is injective when restricted to the neighbourhood of any vertex. To analyse the probability that a $p$-random subset of $E(G)$ contains a circuit, we consider the probability that a $p$-random subset of $E(\Gamma)$ contains a long path containing the root. The following lemma is a stronger `constructive' version of Theorem 6.2 of [\ref{lyons}] (which applies to general infinite rooted trees) in the case where $\Gamma$ is the covering tree of a finite graph. 

\begin{lemma}\label{treesubset}
	Let $G$ be a connected graph of minimum degree at least $2$ and let $\lambda = \lambda_*(B(G))$. There is an arc $e_0$ of $G$ so that, if $p \in [0,1]$ and $X$ is a $p$-random subset of $E(\Gamma_{e_0}(G))$, then, for each integer $n \ge 1$, the probability that $X$ contains a $n$-edge path of $\Gamma_{e_0}(G)$ containing the root is at least $p - \tfrac{1}{\lambda}$.
	\end{lemma}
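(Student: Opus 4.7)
The plan is to apply the Paley-Zygmund inequality to a Perron-weighted count of length-$n$ paths from the root of $\Gamma_{e_0}(G)$. Let $w_*$ be a positive Perron eigenvector of $B(G)$ with eigenvalue $\lambda$, and choose $e_0$ to be an arc with $w_*(e_0) = \max_e w_*(e)$; this maximizing choice will be essential in the second moment estimate. The conclusion is trivial when $p \le 1/\lambda$, so I assume henceforth that $\mu := p\lambda > 1$.

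For each length-$n$ path $P$ from the root of $\Gamma_{e_0}(G)$---equivalently, each non-backtracking walk $(u_0, u_1, \ldots, u_n)$ in $G$ with $(u_0, u_1) = e_0$---set $c(P) = w_*((u_{n-1}, u_n))$ and define
\[Y_n = \sum_{P} c(P) \mathbf{1}_{E(P) \subseteq X}.\]
Since $w_*$ is strictly positive, $\{Y_n > 0\}$ is exactly the event of interest, so it suffices by Paley-Zygmund to show $(\ev Y_n)^2 / \ev Y_n^2 \ge p - 1/\lambda$. The first moment is immediate from $\sum_P c(P) = (B^{n-1} w_*)(e_0) = \lambda^{n-1} w_*(e_0)$, giving $\ev Y_n = w_*(e_0)\, p\, \mu^{n-1}$.

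For the second moment, I would decompose $\ev Y_n^2 = \sum_{k=1}^{n} p^{2n-k} \sum_{(P, Q): |P \wedge Q| = k} c(P) c(Q)$, noting that $|E(P) \cup E(Q)| = 2n - k$ and that $k \ge 1$ since the root has a unique child. For $1 \le k < n$ and each fixed common prefix $(u_0, \ldots, u_k)$, the eigenvalue equation at $u_k$ bounds the sum over diverging extensions by
\[\lambda^{2(n-k-1)} \biggl(\sum_{w \sim u_k,\, w \ne u_{k-1}} w_*((u_k, w))\biggr)^{\!2} = \lambda^{2(n-k)} w_*((u_{k-1}, u_k))^2.\]
Summing over prefixes reduces the problem to estimating $\sum_f (B^{k-1})_{e_0, f} w_*(f)^2$; here the maximizing choice of $e_0$ is crucial, since bounding $w_*(f) \le w_*(e_0)$ produces the clean estimate $w_*(e_0)^2 \lambda^{k-1}$. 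Adding the diagonal contribution ($k = n$) and summing a geometric series yields $\ev Y_n^2 \le w_*(e_0)^2 \mu^n(\mu^n - 1)/(\lambda(\mu - 1))$, and combined with $(\ev Y_n)^2 = w_*(e_0)^2 \mu^{2n}/\lambda^2$, Paley-Zygmund produces the ratio $\mu^n(\mu - 1)/(\lambda(\mu^n - 1)) \ge (\mu - 1)/\lambda = p - 1/\lambda$.

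The only delicate point is the interaction between the weighting $c(P) = w_*(\text{last arc})$ and the maximizing choice of $e_0$: without maximization, the prefix estimate fails to produce the factor $w_*(e_0)^2$ needed to cancel with $(\ev Y_n)^2$ in the Paley-Zygmund ratio, and one ends up with $w_*(e_0)/M_*$ times the desired bound. Once both ingredients are in place, the remaining computation is direct algebra.
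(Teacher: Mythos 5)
Your proposal is correct and follows essentially the same route as the paper: both choose $e_0$ to maximize the Perron eigenvector $w_*$, both weight the length-$n$ paths from the root by the eigenvector entry of the last arc (your $Y_n$ is the paper's $Q$ up to the factor $p^{-n}\lambda^{1-n}$), both bound the second moment by grouping pairs of paths by their meet vertex and using the flow/eigenvector identity together with $w_*(f)\le w_*(e_0)$, and both finish with the Paley--Zygmund step (the paper phrases it via Cauchy--Schwarz). The only cosmetic difference is that the paper normalizes so that $\ev Q=1$, which shortens the final algebra.
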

\begin{proof}

	Let $B = B(G)$, and let $w \in \bR^{\overline{E}(G)}$ be the (positive, real) eigenvector of $B(G)$ corresponding to $\lambda$ whose largest entry is $1$. Let $e_0 \in \overline{E}(G)$ be such that $w(e_0) = 1$; we show that $e_0$ satisfies the lemma. We may assume that $p > \frac{1}{\lambda}$. Let $\Gamma = \Gamma_{e_0}(G)$, let $r$ be the root of $G$, and let $\pi\colon V(\Gamma) \setminus \{r\} \to \overline{E}(G)$ be the map associating each walk with its last arc. 
	
	Let $\phi\colon V(\Gamma) \to \bR_{> 0}$ be defined by $\phi(r) = 1$ and $\phi(v) = \lambda^{1-|v|}w(\pi(v))$ for all $v \ne r$. Note that $\phi((e_0)) = w(e_0) = 1$ and that, since $\lambda$ is an eigenvalue of $B$, the sum of $\phi(x)$ over the children $x$ of a vertex $v$ (or over the descendants $x$ of $v$ at any fixed level) is equal to $\phi(v)$. In other words, $\phi$ is a \emph{unit flow} of $\Gamma$.
	
	For $X \subseteq E(\Gamma)$, let $R_X$ denote the vertex set of the component of $\Gamma[X]$ containing $r$. Define a random variable $Q = Q(X)$ by	
	\[Q = p^{-n}\sum_{|x| = n}\phi(x)1_{R_X}(x).\]
	Since $\sum_{|x| = n}\phi(x) = 1$ and each vertex at distance $n$ from the root is in $R_{X}$ with probability exactly $p^n$, we have $\ev(Q) = 1$. We now bound the variance of $Q$.
	\begin{claim}
		$\ev(Q^2) \le (p-\tfrac{1}{\lambda})^{-1}$.
	\end{claim}
	\begin{proof}
		Note that $|x \wedge y| \ge 1$ whenever $|x|,|y| \ne 0$. We have 
		\begin{align*}\ev(Q^2) &= p^{-2n}\sum_{|x| = |y| = n}\phi(x)\phi(y)\prob(x,y \in R_X)\\
		&= p^{-2n}\sum_{|x| = |y| = n}\phi(x)\phi(y) p^{2n-|x \wedge y|}\\
		&= \sum_{|x| = |y| = n}\phi(x)\phi(y)p^{-|x \wedge y|}\\
		&= \sum_{1 \le |z| \le n} p^{-|z|}\sum_{\substack{|x|=|y|= n\\ x \wedge y = z}}\phi(x)\phi(y)\\
		&\le \sum_{1 \le |z| \le n}p^{-|z|}\left(\sum_{\substack{|x| = n \\ x \succ z}}\phi(x)\right)^2\\
		&= \sum_{1 \le |z| \le n}p^{-|z|}\phi(z)^2\\
		&= \sum_{i=1}^n p^{-i} \sum_{|z| = i}\phi(z)^2
		\end{align*}
	Now by definition of $\phi$ and the fact that $w(e) \le 1$ for all $e$ we have
	\[\phi(z)^2 = \lambda^{2-2|z|}w(\pi(z))^2 \le \lambda^{2-2|z|}w(\pi(z)).\]
	For each $e \in \overline{E}$, let $b_e \in \bR^{\overline{E}}$ be the corresponding standard basis vector. For each $1 \le i \le n$, the number of $z \in V(\Gamma)$ with $|z| = i$ and $\pi(z) = e$ is exactly $b_{e_0}^TB^{i-1} b_e$, so it follows from $Bw = \lambda w$ and $b_{e_0}^Tw = 1$ that 
	\[\sum_{|z| = i}\phi(z)^2 \le \lambda ^{2-2i}b_{e_0}^TB^{i-1} \sum_{e \in \overline{E}} b_e w(e) = \lambda^{2-2i}b_{e_0}^TB^{i-1}w = \lambda^{1-i}.\]
	Therefore $\ev(Q^2) \le \sum_{i = 1}^n p^{-i}\lambda^{1-i} < (p-\tfrac{1}{\lambda})^{-1}$, since $p \lambda > 1$.
	\end{proof}
	The Cauchy-Schwartz inequality now gives
	\[1 = \ev(Q)^2 = \ev(Q \cdot 1_{Q > 0})^2 \le \ev(Q^2)\prob(Q > 0) \le (p - \tfrac{1}{\lambda})^{-1}\prob(Q > 0),\]
	so $\prob(Q > 0) \ge p - \tfrac{1}{\lambda}$. If $Q > 0$ then $X$ contains an $n$-edge path of $\Gamma$ containing the root; the lemma follows. 
	\end{proof}
	%Since $Q_n\colon n \ge 0$ is a nonnegative martingale with $\ev(Q_n) = 1$ and $\ev(Q_n)^2 < (p-\tfrac{1}{\lambda})^{-1}$ for all $n$, the martingale convergence theorem implies that $Q_n$ converges in mean square to some $Q$ with $\ev(Q) = 1$ and $\ev(Q^2) \le (p-\tfrac{1}{\lambda})^{-1} $. Now by the Cauchy-Schwartz inequality,
	%\[1 = \ev(Q)^2 = \ev(Q \cdot 1_{Q > 0})^2 \le \ev(Q^2) \prob(Q > 0) \le (p-\tfrac{1}{\lambda})^{-1} \prob(Q > 0),\]
	%so $\prob(Q > 0) \ge p-\tfrac{1}{\lambda}$. Therefore $\prob(\lim_{n \to \infty}Q_n(\omega) > 0) \ge p - \tfrac{1}{\lambda}$; any such $\omega$ will give a nonempty $R_n(\omega)$ for all $n$ and hence (by K\"onig's infinity lemma) will contain an infinite path containing the root. This implies the result.

	\section{Circuits}
	
	In this section, we prove our main theorem. First we show that, if $G$ is `non-degenerate' and $\lambda = \lambda_*(B(G))$, then a $(\tfrac{1}{\lambda}+\eps)$-random subset of $E(G)$ contains a circuit with non-negligible probability.
	
	\begin{theorem}\label{mainalg}
		Let $G$ be a connected graph with minimum degree at least $2$ and let $\lambda = \lambda_*(B(G))$. Let $\eps > 0$ and $p \in [\tfrac{1}{\lambda} + \eps,1]$. If $X$ is a $p$-random subset of $E(G)$, then $X$ contains a circuit of $G$ with probability at least $\tfrac{\eps^2}{4}$. 
	\end{theorem}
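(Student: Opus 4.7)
The plan is to emulate the second-moment argument behind Lemma~\ref{treesubset} at the level of $G$, using the covering tree $\Gamma_{e_0}(G)$ as a bookkeeping device for non-backtracking walks in $G$. The central observation is that every non-backtracking walk of length at least $|V(G)|$ in a finite graph must revisit a vertex by pigeonhole, and therefore contains a circuit.

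First I would apply Lemma~\ref{treesubset} to obtain a distinguished arc $e_0$ of $G$. Setting $\Gamma = \Gamma_{e_0}(G)$ and $n = |V(G)|$, and letting $\tilde X := \pi^{-1}(X) \subseteq E(\Gamma)$ denote the pullback of $X$ (so each edge of $\Gamma$ lies in $\tilde X$ with marginal probability $p$), I would mirror the lemma's proof via the random variable
\[
Q = p^{-n}\sum_{|x|=n}\phi(x)\,1_{R_{\tilde X}}(x),
\]
with $\phi$ and $R_{\tilde X}$ as defined there. The event $\{Q > 0\}$ coincides with the existence of a length-$n$ non-backtracking walk in $G[X]$ starting at $e_0$, which by the central observation forces a circuit in $X$; letting $A$ denote that circuit event, we have $Q = 0$ on $A^c$, and so $\ev(Q \cdot 1_A) = \ev(Q)$. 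For each $x$ with $|x|=n$, the walk $w_x$ in $G$ corresponding to the tree path from the root to $x$ uses some $k(x) \le n$ distinct $G$-edges, so $\prob(x\in R_{\tilde X}) = p^{k(x)} \ge p^n$, giving $\ev(Q) \ge 1$. The Cauchy--Schwarz inequality then yields $\prob(A) \ge \ev(Q)^2/\ev(Q^2)$, reducing the theorem to a bound $\ev(Q^2) = O(\eps^{-2})$.

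The main obstacle is this variance bound. Because $\tilde X$ is a pullback rather than iid, two edges of $\Gamma$ sharing a $G$-image are perfectly correlated, and the quantity $\prob(x,y\in R_{\tilde X}) = p^{|E(w_x)\cup E(w_y)|}$ may exceed its iid analogue $p^{2n - |x\wedge y|}$ whenever $w_x$ and $w_y$ share a $G$-edge past their tree-common-prefix. I would split the sum defining $\ev(Q^2)$ into \emph{generic} pairs $(x,y)$, for which $w_x$ and $w_y$ share $G$-edges only along the common prefix, and \emph{exceptional} pairs, for which they share a $G$-edge elsewhere. The generic pairs reproduce the iid computation from Lemma~\ref{treesubset} verbatim and contribute at most $(p - 1/\lambda)^{-1} \le 1/\eps$. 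Each exceptional pair carries a circuit in $w_x \cup w_y$ by the reconvergence in $G$; the key point is that this circuit lies in $X$ whenever both $x,y\in R_{\tilde X}$, so the indicator $1_{\{x,y\in R_{\tilde X}\}}$ is supported on $A$. This support containment lets one bound the aggregate exceptional contribution by a constant (depending on $\eps$) times $\prob(A)$.

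Combining the two contributions should give an inequality of the form $1 \le \prob(A)/\eps + C\,\prob(A)^2$, which rearranges to $\prob(A) \ge \eps^2/4$ after absorbing the constant $C$ via a suitable re-parameterization (for instance, applying Lemma~\ref{treesubset} at the slightly reduced value $1/\lambda + \eps/2$ to create slack). The hardest step is this exceptional-pair accounting—making precise the heuristic ``reconvergence in $G$ implies a circuit in $X$'' so that it yields a quantitative absorption into the left-hand side of the Cauchy--Schwarz bound.
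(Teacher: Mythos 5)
Your approach is genuinely different from the paper's, and it contains a real gap. The paper never runs a second-moment computation on $G$. Instead it splits the $p$-random set $X$ as $X_1 \cup X_2$ with $X_1$ a $p_1$-random set ($p_1 = p - \eps/2$) and $X_2$ an independent sprinkle, and it couples the \emph{exploration} from $e_0$ in $G[X_1]$ with the exploration in $\Gamma[Y_1]$ for an iid $p_1$-random $Y_1 \subseteq E(\Gamma)$: conditional on the explored region being acyclic, the two explorations are distribution-identical, so the probability of ever finding a cycle in the explored induced subgraph is the same on both sides. That reduces everything to Lemma~\ref{treesubset} on $\Gamma$, where the percolation \emph{is} iid, and then $X_2$ closes one remaining edge. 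No variance bound for a pulled-back (hence correlated) percolation is ever needed.

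The step in your argument that would fail is the bound $\ev(Q^2) \le 1/\eps + C\,\prob(A)$ with $C$ independent of $|V(G)|$. Two problems. First, the ``generic'' pairs do \emph{not} reproduce the iid computation: even when $w_x$ and $w_y$ share $G$-edges only on the common prefix, the walk $w_x$ alone may reuse $G$-edges (its own or prefix ones), so $\prob(x \in R_{\tilde X}) = p^{k(x)}$ with $k(x) < n$, and then $\prob(x,y \in R_{\tilde X})$ can exceed the iid value $p^{2n-|x\wedge y|}$; the calculation in Lemma~\ref{treesubset} does not transfer verbatim. Second, and more seriously, the ``support containment'' for exceptional pairs only gives $\prob(x,y\in R_{\tilde X}) \le \prob(A)$ pointwise; plugging that into the exceptional part of the sum yields at best $p^{-2n}\bigl(\sum \phi(x)\phi(y)\bigr)\prob(A) \le p^{-2n}\prob(A)$, and $p^{-2n}$ grows exponentially in $|V(G)|$. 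To make the exceptional contribution $O(\prob(A))$ with a bounded constant you would need a much finer accounting of \emph{which} cycles are forced in $X$ by $\{x,y \in R_{\tilde X}\}$ and how the weights $\phi(x)\phi(y)p^{-2n}$ concentrate on them; nothing in the sketch points toward such a bound, and the long-range correlations introduced by the pullback are exactly what the paper's coupling is designed to avoid. (Your pigeonhole observation that a length-$|V(G)|$ non-backtracking walk in $G[X]$ already yields a circuit \emph{inside} $X$ is correct and is a nice simplification over the paper's use of $X_2$; but it does not help with the variance bound.)
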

	\begin{proof}
		Let $p_1 = p-\tfrac{\eps}{2} \ge \lambda^{-1} + \tfrac{\eps}{2}$, and let $p_2 \ge \tfrac{\eps}{2}$ be such that $1-p = (1-p_1)(1-p_2)$. Let $X_1$ be a $p_1$-random subset of $E(G)$ and $X_2$ be a $p_2$-random subset of $E(G)$ independent of $X_1$; note that $X_1 \cup X_2$ is identically distributed to a $p$-random subset of $E(G)$. 
				
		Let $e_0 = (u,v)$ be an arc of $G$ given by Lemma~\ref{treesubset} for $p_1$ and $\lambda$ and let $\Gamma = \Gamma_e(G)$. Let $\theta \colon V(\Gamma) \to V(G)$ denote the natural homomorphism from $\Gamma$ to $G$. For each set $U \subseteq V(G)$, let $G(U)$ denote the subgraph of $G$ induced by $U$. 
		
		Given $X_1$, let $R(X_1)$ denote the set of vertices $x$ of $G$ for which there is a non-backtracking walk of $G[X_1]$ from $u$ to $x$ that either contains no arc (that is, $x=u$), or has first arc $(u,v)$. Observe that either $R(X_1) = \{u\}$, or $R(X_1)$ is the vertex set of a connected subgraph of $G$ containing the edge $uv$. Similarly, for a $p_1$-random subset $Y_1$ of $E(\Gamma)$, let $R(Y_1)$ denote the vertex set of the component of $\Gamma[Y_1]$ containing the root. Observe that $G(R(X_1))$ and $G(\theta(R(Y_1)))$ are both connected subgraphs of $G$ containing $u$ (in general these graphs have edges not in $X_1$ or $\theta(Y_1)$). Let $C_G$ denote the event that $G(R(X_1))$ contains a circuit, and $C_{\Gamma}$ the event that $G(\theta(R(Y_1)))$ contains a circuit. 
		
		\begin{claim}
			$\prob(C_G) = \prob(C_{\Gamma})$. 
		\end{claim}
		\begin{proof}
			Let $\cZ'$ denote the collection of subsets of $V(G)$ that induce an acyclic connected subgraph of $G$ containing $u$ and $v$, and let $\cZ = \cZ' \cup \{\{u\}\}$. The event $C_G$ fails to hold exactly when $R(X_1) \in \cZ$, so 
			\[1 - \prob(C_G) = \sum_{Z \in \cZ} \prob(R(X_1) = Z).\]
			Similarly, we have
			\[1 - \prob(C_{\Gamma}) = \sum_{Z \in \cZ} \prob(\theta(R(Y_1)) = Z).\]		
		Clearly $\prob(R(X_1) = \{u\}) = \prob(\theta(R(Y_1)) = \{u\}) = 1-p$. Let $Z \in \cZ'$. By acyclicity of $G(Z)$, there is a unique subtree $\Gamma_Z$ of $\Gamma$ that contains the root of $\Gamma$ and satisfies $\theta(V(\Gamma_Z)) = Z$, and moreover $G(Z)$ and $\Gamma_Z$ are isomorphic finite trees. Now $|E(G(Z))| = |E(\Gamma_Z)|$, and the number of edges of $G$ with exactly one end in $Z \setminus \{u\}$ is equal to the number of edges of $\Gamma$ with exactly one end in $V(\Gamma_Z)$, so \[\prob(R(X_1) = Z) = \prob(R(Y) = V(\Gamma_Z)) = \prob(\theta(R(Y)) = Z).\] The claim now follows from the above two summations.  
		\end{proof}
	If $Y_1$ contains a $|V(G)|$-edge path of $\Gamma$ that contains the root, then this path is mapped by $\theta$ to a non-backtracking walk of $G$ visiting some vertex twice, so $G(\theta(R(Y_1)))$ contains a circuit of $G$. Thus, by the claim above and Lemma~\ref{treesubset}, we have $\prob(C_G) = \prob(C_\Gamma) \ge p_1-\lambda \ge \tfrac{\eps}{2}$. 
	
	Suppose that $C_G$ holds; then $H = G(R(X_1))$ is a graph containing a cycle and having a spanning tree contained in $X_1$. Thus, there is some edge $f$ of $H$ such that $X_1 \cup \{f\}$ contains a circuit of $H$; such an $f$ exists for every $X_1$ satisfying $C_G$. Now $f \in X_2$ with probability $p_2$, so the probability that $X_1 \cup X_2$ contains a circuit of $G$ is at least $p_2 \prob(C_G) \ge \left(\frac{\eps}{2}\right)^2 = \frac{\eps^2}{4}$. This gives the result.
		\end{proof}
		
		We now restate and prove our main theorem. 
	
		\begin{theorem}\label{maintech}
			For all $\eps > 0$ and $\mu > 2$, there exists $N= N(\eps,\mu) \in \bZ$ so that, if $G$ is a graph on at least $N$ vertices with average degree at least $\mu$, and $X$ is a $p$-random subset of $E(G)$ for some $p \in [\tfrac{1}{\mu-1} + \eps,1]$, then $X$ contains a circuit of $G$ with probability at least $1-\eps$. 
		\end{theorem}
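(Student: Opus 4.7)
The plan is to combine Theorem~\ref{mainalg} with Margulis' zero-one law (Theorem~\ref{margulis}): the former gives a constant (in $\eps, \mu$) lower bound on the probability of a circuit, and the latter sharpens any such bound to exceed $1-\eps$, provided $\Delta(\cW)$, the girth of the underlying graph, is large. The main obstacle is that $G$ need not have large girth -- consider many disjoint copies of $K_4$, which have average degree $3$ and girth $3$, yet no high-girth subgraph has comparable average degree. I resolve this via a dichotomy: either $G$ has many edge-disjoint short cycles (and then $X$ contains one with high probability by independence), or few (and then a bounded edge deletion yields a high-girth graph with almost the same average degree).

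First I would pre-process $G$ by iteratively deleting vertices of degree at most $1$, obtaining a (possibly disconnected) subgraph $G_1$ of minimum degree $\ge 2$ and average degree $\ge \mu$; removing a degree-$\le 1$ vertex from a graph of average degree $>2$ can only increase the average degree, and $\mu > 2$. A simple edge count shows $|V(G_1)| = \Omega_\mu(\sqrt{N})$. Next, fix a small $\delta = \delta(\eps,\mu) > 0$, a large $g = g(\eps,\mu)$ with $g \ge s(\eps^2/64)$ (where $s$ is from Margulis' theorem), a large $k = k(\eps,\mu)$ with $(1-(\mu-1)^{-g})^k < \eps$, and $N$ so large that $2kg/|V(G_1)| < \delta$.

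Case-split on the maximum number of pairwise edge-disjoint cycles of length at most $g$ in $G_1$. If there are $\ge k$ such cycles $C_1, \dotsc, C_k$, then since the events $\{C_i \subseteq X\}$ involve disjoint edges and hence are independent, $\prob(X \text{ contains no } C_i) \le (1-p^g)^k < \eps$, giving a circuit with probability $\ge 1-\eps$. Otherwise, let $F$ be the union of the edge-sets of a maximum such collection; $|F| < kg$, and by maximality every cycle of length $\le g$ in $G_1$ meets $F$, so $G_1 - F$ has girth exceeding $g$ and average degree $\ge \mu - \delta$. Pass to a connected component of $G_1 - F$ of largest average degree, then iteratively prune degree-$\le 1$ vertices, obtaining $G_3$: connected, minimum degree $\ge 2$, girth $> g$, and average degree $\ge \mu - \delta$. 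By Lemma~\ref{boundLambda}, $\lambda := \lambda_*(B(G_3)) \ge \mu - 1 - \delta$, so for $\delta$ small in terms of $\eps, \mu$, $p_0 := \tfrac{1}{\mu-1} + \tfrac{\eps}{2}$ satisfies $p_0 \ge \tfrac{1}{\lambda} + \tfrac{\eps}{4}$. Thus Theorem~\ref{mainalg} applied to $G_3$ at $p_0$ yields $\prob(X \cap E(G_3) \text{ contains a circuit of } G_3) \ge \eps^2/64$. The family $\cW$ of such $X$ is increasing with $\Delta(\cW) = \mathrm{girth}(G_3) > g \ge s(\eps^2/64)$, so Margulis' theorem forces the $p$-interval on which $\eps^2/64 \le f_\cW(p) \le 1 - \eps^2/64$ to have length $< \eps^2/64 < \eps/2$; hence $f_\cW(p) > 1 - \eps^2/64 \ge 1 - \eps$ for $p \ge p_0 + \eps/2$, completing Case~2.

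The main obstacle is the constant-juggling: choosing $g, k, \delta, N$ so the Margulis threshold $\eps^2/64$ fits below both the lower bound from Theorem~\ref{mainalg} and the $\eps/2$-gap between $p_0$ and the target $p$, and so that Case~1 independently delivers probability $\ge 1-\eps$. The bookkeeping around Case~2 requires confirming that girth survives the edge deletion (monotone under subgraphs) and that average degree survives the pruning (since it stays above $2$), and that $|V(G_1)|$ grows with $N$ so that deleting $|F| < kg$ edges costs only $\delta$ in average degree; crucially, neither Margulis' theorem nor Theorem~\ref{mainalg} requires a lower bound on $|V(G_3)|$ itself, so no further size hypothesis on the final graph is needed.
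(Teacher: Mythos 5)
Your proposal is correct and follows essentially the same route as the paper: a dichotomy on the maximum number of pairwise edge-disjoint short cycles, with a direct independence argument in one case and the combination of Lemma~\ref{boundLambda}, Theorem~\ref{mainalg}, and Margulis' theorem in the other. The only real deviation is your initial pruning of $G$ to min-degree $\ge 2$ \emph{before} the dichotomy, which forces the $\lvert V(G_1)\rvert = \Omega_\mu(\sqrt{N})$ detour; the paper sidesteps this by performing the dichotomy and edge deletion on $G$ itself (so the vertex count stays $\ge N$) and only pruning degree-$\le 1$ vertices afterward.
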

		\begin{proof}		
			Let $\eps > 0$, $\mu > 2$, and $p \in [\tfrac{1}{\mu-1}+\eps,1]$. Let $s = s(\tfrac{\eps^2}{16})$ be given by Theorem~\ref{margulis}. Let $p_1 = p-\tfrac{\eps^2}{16}$. Let $t$ be an integer so that $(1-p_1^s)^t \le \eps$. 
 Let $\mu_1 \in (2,\mu)$ be such that $\tfrac{1}{\mu_1-1} + \tfrac{\eps}{2} \le p_1$. Set $N = \left\lceil \tfrac{2st}{\mu_1-\mu}\right\rceil$.		
			
			Let $G$ be a graph on at least $N$ vertices with average degree at least $\mu$. For each $x \in [0,1]$, let $f(x)$ denote the probability that an $x$-random subset of $E(G)$ contains a circuit of $G$. We will show that $f(p) \ge 1- \eps$. 
			
			\begin{claim}
				Either
				\begin{itemize}
				\item $f(p_1) \ge 1-\eps$, or
				\item $G$ has a connected subgraph $H$ with girth at least $s$, minimum degree at least $2$, and average degree at least $\mu_1$. 
				\end{itemize}
			\end{claim}
			\begin{proof}[Proof of claim:]
				Let $\cC$ be a maximal collection of pairwise edge-disjoint circuits of size less than $s$ in $G$. If $|\cC| \ge t$ then $f(p_1) \ge 1-(1-{p_1}^s)^{t} \ge 1 - \eps.$ 
				If $|\cC| < t$, then let $G'$ be obtained by removing the edges of all circuits in $\cC$ from $G$. By the maximality of $\cC$, the graph $G'$ has girth at least $s$. Now by choice of $N \le |V(G)|$, we have 
				\[|E(G')| \ge |E(G)| - st \ge \tfrac{\mu}{2}|V(G)| - \tfrac{\mu-\mu_1}{2}N \ge \tfrac{\mu_1}{2}|V(G)|,\]			
			so $G'$ has average degree at least $\mu_1$. Now, let $G''$ be obtained from $G'$ by deleting degree-$1$ vertices until no more such deletions are possible. It is easy to see (since $\mu_1 \ge 2$) that $G''$ has minimum degree at least $2$ and average degree at least $\mu_1$. Any connected component $H$ of $G''$ with largest-possible average degree will satisfy the claim. 			
			\end{proof}
			Since $f(p) \ge f(p_1)$, we may assume that the above subgraph $H$ exists. Let $\cW$ be the collection of edge sets of subgraphs of $H$ that contain a circuit; let $g(x) = \sum_{X \in \cW}x^{|X|}(1-x)^{|E(H)|-|X|}$. This is the probability that an $x$-random subset of $E(H)$ contains a circuit, so we have $f(x) \ge g(x)$. By Lemma~\ref{boundLambda} we have $\lambda_*(B(H)) \ge \mu_1 - 1$, so $p_1 \ge (\lambda_*(B(H)))^{-1} + \tfrac{\eps}{2}$ and therefore $g(p_1) \ge \tfrac{\eps^2}{16}$ by Theorem~\ref{mainalg}.

			 Now $\Delta(\cW)$ is the girth of $H$, so $\Delta(\cW) \ge s$ and therefore the interval $\{x \in [0,1]: \tfrac{\eps^2}{16} \le g(x) \le 1 - \tfrac{\eps^2}{16}\}$ has length at most $\tfrac{\eps^2}{16}$ by Theorem~\ref{margulis}. Therefore $g(p) = g(p_1 + \tfrac{\eps^2}{16}) \ge 1- \tfrac{\eps^2}{16} \ge 1-\eps$. Since $f(p) \ge g(p)$, the result follows.  
\end{proof}

\section*{References}

\newcounter{refs}

\begin{list}{[\arabic{refs}]}
{\usecounter{refs}\setlength{\leftmargin}{10mm}\setlength{\itemsep}{0mm}}

\item\label{ab06}
N. Alon and E. Bachmat, 
Regular graphs whose subgraphs tend to be acyclic,
Random Struct. Algo. 29 (2006), 324--337.

\item\label{alon}
N. Alon, S. Hoory and N. Linial,
The Moore Bound for Irregular Graphs,
Graph Combinator. 18 (2002), 53--57. 

\item\label{dz}
L. Decreusefond and G. Z\'{e}mor,
On the error-correcting capabilities of cycle codes of graphs,
Combin. Probab. Comput. 6 (1997), 27--38. 

\item\label{diestel}
R. Diestel, 
Graph Theory, 
Springer, 2000. 

\item\label{durrett}
R. Durrett, 
Probability: Theory and Examples (4th edition), 
Cambridge University Press, 2010. 

\item\label{fk13}
A. Frieze and M. Krivelevich, 
On the non-planarity of a random subgraph,
Combin. Probab. Comput. 22 (2013), 722--732.

\item\label{gr}
C. Godsil and G. Royle, 
Algebraic Graph Theory,
Springer, 2001. 

\item\label{gelfand}
I. Gelfand,
Normierte ringe,
Rech. Math. [Mat. Sbornik] N.S., 9 (1941), 3--24

\item\label{ks14}
M. Krivelevich and W. Samotij, 
Long paths and cycles in random subgraphs of H-free graphs,
Electron. J. Combin. 21 (2014), P1.30. 

\item\label{lyons}
R. Lyons, 
Random walks and percolation on trees,
Ann. Probab. 18 (1990), 931--958.

\item\label{margulis74}
G. Margulis, 
Probabilistic characterisations of graphs with large connectivity, 
Problemy Peredachi Informatsii 10 (1974), 101--108. 

\item\label{nvz15}
P. Nelson and Stefan H.M. van Zwam, 
On the maximum-likelihood decoding threshold for graphic codes, 
In preparation.

\item\label{zemor}
G. Z\'{e}mor, 
Threshold effects in codes, 
In \emph{Algebraic Coding: Lecture Notes in Computer Science 781.} Springer-Verlag, 278--286. 
\end{list}

\end{document}